\renewcommand{\raggedright}{\justifying}							
\pgfplotsset{compat=1.15}
\newtheorem{theorem}{Theorem}[section]
\newtheorem{lemma}[theorem]{Lemma}
\newtheorem{proposition}[theorem]{Proposition}
\newtheorem{corollary}[theorem]{Corollary}
\theoremstyle{definition}
\newtheorem{definition}[theorem]{Definition}
\theoremstyle{remark}
\newtheorem*{notation*}{Notation}
\newcommand{\Hyp}{\mathbb{H}}
\newcommand{\bH}{\mathbb{H}}
\newcommand{\cH}{\mathcal{H}}
\newcommand{\cL}{\mathcal{L}}
\newcommand{\rM}{\mathrm{M}}
\newcommand{\cO}{\mathcal{O}}
\newcommand{\rP}{\mathrm{P}}
\newcommand{\Q}{\mathbb{Q}}
\newcommand{\R}{\mathbb{R}}
\newcommand{\id}{\mathrm{id}}
\newcommand{\SL}{\mathrm{SL}}
\newcommand{\PSL}{\mathrm{PSL}}
\newcommand{\tr}{\mathrm{tr}}
\DeclareMathOperator{\arcosh}{arcosh}
\newcommand{\Aff}{\mathrm{Aff}}%
\newcommand{\C}{\mathbb{C}}
\newcommand{\cT}{\mathcal{T}}%
\newcommand{\rd}{\mathrm{d}}%
\newcommand{\cM}{\mathcal{M}}%
\newcommand{\Mod}{\mathrm{Mod}}%
\newcommand{\SO}{\mathrm{SO}}%
\newcommand{\Stab}{\mathrm{Stab}}%
\newcommand\norm[1]{\lVert#1\rVert}%
\def\tareesidedbox#1{\setbox0=\hbox{$#1$}\dimen0=\wd0 \advance\dimen0 by3pt\rlap{\hbox{\vrule height9pt width.4pt depth2pt \kern-.4pt\vrule height9.4pt width\dimen0 depth-9pt\kern-.4pt \vrule height9pt width.4pt depth2pt}} \relax \hbox to\dimen0{\hss$#1$\hss}}
 \title[Multiplicities in spectra of semi-arithmetic surfaces]{On multiplicities in length spectra of semi-arithmetic hyperbolic surfaces}
\author[M. Belolipetsky]{Mikhail Belolipetsky}
\address{IMPA, Estrada Dona Castorina 110, 22460-320 Rio de Janeiro, Brazil}
\email[]{mbel@impa.br}
\author[G. Cosac]{Gregory Cosac} 
\address{Departamento de Matem\'atica Aplicada, IME - Universidade de S\~ao Paulo, Rua do Mat\~ao 1010, 05508-090 – São Paulo, SP, Brazil}
\email{cosac@ime.usp.br}
\author[C. D\'oria]{Cayo D\'oria}
\address{UFS, Departamento de Matem\'atica - Av. Marcelo D\'eda Chagas s/n, 49100-000. S\~ao Crist\'ov\~ao, Brazil}
\email{cayo@mat.ufs.br}
\author[G. Teixeira Paula]{Gisele Teixeira Paula}
\address{UFPR, Centro Politécnico - Av. Cel. Francisco H. dos Santos 100, 81530-000. Curitiba, Brazil}
\email[]{giseleteixeira@ufpr.br}
\begin{document}
 
\begin{abstract}
We show that semi-arithmetic surfaces of arithmetic dimension two which admit a modular embedding have exponential growth of mean multiplicities in their length spectrum. Prior to this work large mean multiplicities were rigorously confirmed only for the length spectra of arithmetic surfaces.
We also discuss the relation of the degeneracies in the length spectrum and quantization of the Hamiltonian mechanical system on the surface.
\end{abstract}

\maketitle

\section{Introduction}
Let $S = \Gamma\backslash\Hyp$ be a hyperbolic surface, possibly with elliptic singularities and with cusps. The prime geodesic theorem, which goes back to the classical work of Huber and Selberg, asserts that the number of closed geodesics on $S$ of length at most $\ell$ grows exponentially: 
$$\mathcal{N}(\ell) \sim \dfrac{e^\ell}{\ell} \text{ as } \ell\to\infty.$$ 


Distribution and multiplicity of the lengths of closed geodesics (periodic orbits) on surfaces have been of interest for theoretical physics in connection with the phenomenon of quantum chaos. In 1980s, Aurich and Steiner experimentally observed that some surfaces have extremely large multiplicities in their geodesic length spectrum. This result is essentially the starting point for our present work so let us give some definitions and state it precisely. 

Let $\{0 < \ell_1 < \ell_2 < \cdots \}$ be the geodesic length set (that is, without multiplicities) of $S$ with counting function
\begin{align*}
    \mathcal{N}'(\ell) = \#\{n \, ; \,\ell_n \leq \ell\}.
\end{align*}
Denote by $g(\ell_n)$ the multiplicity of $\ell_n$ in the geodesic length spectrum of $S$. The \emph{mean multiplicity} $\langle g(\ell) \rangle$ is defined to be a continuous function of $\ell$ that is an \emph{average order} of $g$, meaning:

\begin{align*}
    \mathcal{N}(\ell) = \sum_{\ell_n \leq \ell} g(\ell_n) \sim \sum_{\ell_n \leq \ell} \langle g(\ell_n)\rangle = \int_0^\ell \langle g(x)\rangle \, d\mathcal{N'}(x),
\end{align*}
as $\ell \to \infty$. The right-hand side is the Riemann--Stieltjes integral of the mean multiplicity with respect to $\mathcal{N'}$.

We refer to \cite[Section~3.2]{Bol93_long} for more details regarding this notion. We will say that $S$ has \emph{exponential growth of the mean multiplicities} (EGMM) if for sufficiently large $\ell$ its mean multiplicity satisfies 
$$\langle g(\ell) \rangle \geq e^{C\ell}\text{ for some constant } C = C(S) > 0.$$

 In their work, Aurich and Steiner detected a strong exponential growth of the mean multiplicities in the spectrum of some particular surfaces \cite{AuSt88}, by showing that 
$\langle g(\ell) \rangle \sim c \dfrac{e^{\ell/2}}{\ell}\text{ as } \ell\to\infty.$
It was later noticed in \cite{Bog92} that EGMM tends to occur in the spectrum of \emph{arithmetic surfaces}, i.e. when $\Gamma$ is an arithmetic Fuchsian group. A short proof of this fact follows from the work of Luo and Sarnak \cite{LuoSarnak94} (see also \cite{Bol93} for a detailed derivation which, in addition, yields the value for the multiplicative constant in the asymptotic growth formula).

The proofs in \cite{LuoSarnak94} and \cite{Bol93} use the bounded clustering property of the length spectrum of arithmetic surfaces. More precisely, we say that a group $\Gamma < \SL(2, \R)$ satisfies the \emph{bounded clustering} property (B-C) if there exists a constant $c = c(\Gamma) < \infty$ such that 
$$\#(\mathcal{L}(\Gamma) \cap [N-1, N]) \leq c\text{ for all } N\in \mathbb{N}, $$
where $\mathcal{L}(\Gamma) = \{ |\tr(\gamma)| \mid \gamma\in\Gamma \}$. The absolute values of traces of hyperbolic isometries determine the lengths of corresponding geodesics on the quotient surface (see Section~\ref{sec:prelim}), hence we defined the bounded clustering in the length spectrum of $S$.  
Sarnak conjectured that the \mbox{B-C} property is characteristic for arithmetic Fuchsian groups, which was later proved in the non-cocompact case by Geninska and Leuzinger \cite{GenLeu08}. Around the time of Sarnak's conjecture it was assumed and stated in some papers that the EGMM property should also imply arithmeticity (see e.g. \cite{Bol93_long}). Later on, however, physicists found experimental evidence and heuristic arguments showing that some non-arithmetic hyperbolic surfaces also tend to have EGMM, although with a smaller exponent \cite{Bog97, Bog04}. In this paper, we prove this fact. To the best of our knowledge, this is the first time when exponential growth of the mean multiplicities in the length spectrum is rigorously confirmed in the non-arithmetic case.

Our main result is the following
\begin{theorem}\label{thm}
 All semi-arithmetic Fuchsian groups which admit a modular embedding and have arithmetic dimension at most \(2\) have exponential growth of the mean multiplicity in the length spectrum.   
\end{theorem}
We will give definitions of (semi-)arithmeticity, modular embedding and arithmetic dimension in Section~\ref{sec:prelim}. The proof is different for cocompact and non-cocompact Fuchsian groups, which are treated in Theorem~\ref{thm1} and Theorem~\ref{thm2}, respectively. The proof of the first theorem uses the Schwarz--Pick lemma and some arithmetic properties of semi-arithmetic groups. The latter imply a weaker form of the B-C property which is sufficient for our purpose
(see Lemma~\ref{lem B-C}). This argument does not generalize directly to the non-compact surfaces. In order to prove Theorem~\ref{thm2} we first apply the generalized Kirszbraun theorem of \cite{LangSchr97}, which allows us to work with the compact core of the surface $S$. With this we are able to replicate the proof of Theorem~\ref{thm1}. The details are given in Section~\ref{sec:proofs}.

For arithmetic dimension $1$ our theorem refers to the well known case of the arithmetic Fuchsian groups, the new result is about groups whose arithmetic dimension is equal to $2$. In Section \ref{290625.1} we list examples of such groups. By  Cohen and Wolfart \cite{Cohen90}, all Fuchsian triangle groups admit modular embeddings. Nugent and Voight showed in \cite{Nugent17} that there exist only finitely many triangle groups of a given arithmetic dimension (see also \cite{BCDT25} for a different proof of a more general result). The method of \cite{Nugent17} is effective and allows us to produce a list of such groups, for small dimensions the results are presented in Section~6 of their paper. The list of triangle groups of arithmetic dimension $2$ consists of $148$ cocompact and $16$ non-cocompact groups. The cocompact part of the list includes four groups from the Hilbert series (cf. \cite{McMullen23}). The non-cocompact part includes a remarkable non-arithmetic Hecke group $\Delta(2,5,\infty)$ which was closely studied in a recent paper by McMullen \cite{McMullen22}. The previous work of McMullen also provides an infinite set of examples of non-cocompact semi-arithmetic Fuchsian groups of arithmetic dimension $2$ which admit a modular embedding and are not commensurable with the triangle groups \cite{McMullen03}. 

We see that our results provide many interesting examples of non-arithmetic Fuchsian groups with EGMM, but there is a natural question that remains open: What can we say about semi-arithmetic groups of arithmetic dimension bigger than $2$? Our method breaks here because the estimate we get for the number $\mathcal{N}'(\ell)$ of geodesics of different lengths has the same growth rate as the total number of geodesics $\mathcal{N}(\ell)$. From the first view, it appears as a shortfall of the method, however, a similar problem was previously encountered in computer experiments of Bogomolny and Schmit \cite{Bog04}. They observed that for these groups the increase of multiplicities numerically is too small to be detected from direct calculations of periodic orbits that they performed. It leaves open the problem about growth of mean multiplicities for general semi-arithmetic Fuchsian groups. Tackling this problem might require new ideas, so far we do not have even a working conjecture that would apply to this case. It is worth noting that generic Fuchsian groups are expected to have bounded mean multiplicity (cf. \cite[p.~247]{Bog97}). 

\medskip

\noindent
\textbf{Acknowledgments.} We thank Jens Marklof and Curtis McMullen for their interest in this work. We also thank the anonymous referee for careful proofreading and helpful comments. 
The work of M.B. is partially supported by the FAPERJ grant E-26/204.250/2024,  the Institut Henri Poincar\'e (UAR 839 CNRS-Sorbonne Universit\'e), and LabEx CARMIN (ANR-10-LABX-59-01).
G.C. is grateful for the grant 2024/01650-3, S\~ao Paulo Research Foundation (FAPESP). The authors C.D. and G.T.P. are grateful for the support of CNPq Grant 408834/2023-4. 

\section{Preliminaries}\label{sec:prelim}
\subsection{}
Let $\bH$ denote the hyperbolic plane, whose full group of orientation-preserving isometries is well known to be the group $\PSL(2,\R)$. Elements of $\PSL(2,\R)$ are classified in terms of their fixed points as elliptic, parabolic or hyperbolic. The last of them are those elements for which the absolute values of the traces of corresponding matrices in $\SL(2,\R)$ are bigger than 2.
The trace of a hyperbolic element  $\gamma \in \PSL(2,\R)$ is defined up to a sign and determines the translation length $\ell(\gamma)$ of the isometry, which is the length of the corresponding closed geodesic on the quotient surface. This relation is given by the formula 
\begin{equation}\label{eq:trace}
\ell(\gamma) = 2 \arcosh \left( \frac{|\tr(\gamma)|}{2} \right).
\end{equation}

Let $\Gamma < \PSL(2,\R)$ be a finitely generated Fuchsian group. The \emph{invariant trace field} of $\Gamma$ is the field generated by the traces of all elements of the group $\Gamma^{(2)} = \langle \gamma^2 \mid \gamma \in \Gamma \rangle$ over the rational numbers. This field will be denoted by \(K\)  and, as its name suggests, is an invariant of the commensurability class of $\Gamma$ (see \cite[Chapter~3]{maclachlan2003arithmetic}).

\begin{definition}
    Let $\Gamma$ be a Fuchsian group of finite covolume. We say that $\Gamma$ is \emph{semi-arithmetic} if $K$ is a totally real number field and the traces of elements of $\Gamma$ are algebraic integers.
\end{definition}

This is a natural generalization of \emph{arithmetic Fuchsian groups}. Indeed, by using a characterization given by Takeuchi, arithmetic Fuchsian groups must also satisfy the extra condition that every non-trivial  embedding $K \hookrightarrow \R$ maps the set of traces of elements of $\Gamma^{(2)}$ to a bounded subset $[-2,2]\subset\R$ (cf. \cite{Takeuchi75}). In the semi-arithmetic case we allow $r \ge 1$ embeddings of $K$ for which the images of the set of traces are unbounded. The number $r$ is called the \emph{arithmetic dimension} of $\Gamma$ (cf. \cite{Nugent17}). Notice that arithmetic Fuchsian groups have arithmetic dimension $1$, and in general we have $1\leq r \leq d$, with $d=[K:\Q]$ denoting the degree of the invariant trace field.

We can also characterize these groups in terms of quaternion algebras as it is described, for instance, in \cite{BCDT25}. Let us briefly recall the construction, later on we will use it in the definition of a modular embedding.

Let $\sigma_1 = \mathrm{id}, \sigma_2, \ldots, \sigma_d$ be the embeddings of $K$ into $\R$. We  assume that the associated quaternion algebra $A/K$ splits over $\sigma_1 = \mathrm{id}$, \ldots, $\sigma_r$. Thus we have an isomorphism
\begin{align}\label{isomosphism_algebra}
    A\otimes_\Q \R \cong  \rM(2,\R)^{r} \times \cH^{d-r},
\end{align}
which maps $x\otimes y$ to $(y\rho_1(x),\dots,y\rho_d(x))$, where $\cH$ is Hamilton's quaternion algebra. Notice that we have
\begin{align*}
    \tr\,\rho_i(x) = \sigma_i(\tr(x)) \quad \text{ and } \quad \mathrm{det}\,\rho_i(x) = \sigma_i(\mathrm{norm}(x)),
\end{align*}
for all $x\in A$ and all $i = 1, \ldots, d$.

Let $A^1$ be the subgroup of elements of $A$ of norm $1$. Then $\rho = (\rho_1,\dots,\rho_d)$ restricts to an embedding
\begin{align*}
    \rho: A^1 \hookrightarrow \SL(2,\R)^r
\end{align*}
and for each $x\in A^1$, $\rho(x) = (\rho_1(x),\ldots,\rho_r(x))$ acts on $\bH^r$ componentwise via M\"obius transformations.

If $\cO$ is an order in $A$, then by the Borel and Harish-Chandra theorem $\rho(\cO^1)$ is a \emph{lattice} in $\SL(2,\R)^r$, i.e. it is a discrete subgroup of finite covolume.

\begin{definition}[cf. {\cite{SW00}}]\label{arithactingonH2}
    A subgroup $\Delta$ of $\PSL(2,\R)^r$ will be called an \emph{arithmetic group acting on} $\bH^r$ if it is commensurable to some $\rP\rho_1(\cO^1)$ as above. If $\Delta$ is a finite index subgroup of $\rP\rho_1(\cO^1)$, we say it is \emph{derived from a quaternion algebra}.
\end{definition}

Any group derived from a quaternion algebra as above acts on $\bH^r$ in a natural way, since it can be embedded in $\SL(2,\R)^r$ by the map $\rho \circ \rho_1^{-1}$. 

The following proposition provides a general characterization of semi-arithmetic Fuchsian groups in terms of the preceding discussion:

\begin{proposition}[{\cite[Proposition 1]{SW00}}]
    A Fuchsian group $\Gamma$ of finite covolume is semi-arithmetic of arithmetic dimension $r$ if and only if it is commensurable to a subgroup of an arithmetic group $\Delta$ acting on $\bH^r$.
\end{proposition}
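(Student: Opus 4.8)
The plan is to use the \emph{invariant quaternion algebra} of $\Gamma$ as the bridge between the two conditions. Lifting $\Gamma$ to $\SL(2,\R)$, I would form
\[
A\Gamma \;=\; K[\Gamma^{(2)}] \;=\; \Bigl\{\, \textstyle\sum_j a_j\gamma_j \;:\; a_j\in K,\ \gamma_j\in\Gamma^{(2)} \,\Bigr\}\;\subset\;\rM(2,\R),
\]
and recall from the standard theory (\cite[Ch.~3]{maclachlan2003arithmetic}) that for a non-elementary group of finite covolume this is a quaternion algebra over the invariant trace field $K$, depending only on the commensurability class of $\Gamma$. The proof then reduces to matching two invariants of $A\Gamma$: the number of real places of $K$ at which it splits, and the arithmetic dimension $r$, i.e.\ the number of embeddings sending $\tr\Gamma^{(2)}$ to an unbounded subset of $\R$.

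For the forward implication, assume $\Gamma$ is semi-arithmetic of dimension $r$, so $K$ is totally real with real embeddings $\sigma_1=\id,\dots,\sigma_d$ and every completion $A\Gamma\otimes_{K,\sigma_i}\R$ is a central simple real algebra of dimension four, hence isomorphic to $\rM(2,\R)$ or to $\cH$. At $\sigma_1=\id$ the real span of $\Gamma^{(2)}$ already fills $\rM(2,\R)$, so this place splits. For the remaining places I would invoke the trace criterion of \cite[Ch.~8]{maclachlan2003arithmetic}: the place $\sigma_i$ is ramified (factor $\cH$) exactly when $\sigma_i(\tr\Gamma^{(2)})\subseteq[-2,2]$, and split (factor $\rM(2,\R)$) otherwise. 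Since $\tr\Gamma^{(2)}$ is unbounded under $\sigma_i$ precisely when some $\gamma\in\Gamma^{(2)}$ has $|\sigma_i(\tr\gamma)|>2$ (its powers then drive the traces to infinity), the split places are exactly the $r$ places of unbounded trace, yielding the isomorphism \eqref{isomosphism_algebra}. Because the traces are algebraic integers, each $\gamma\in\Gamma^{(2)}$ is a root of the monic integral polynomial $x^2-\tr(\gamma)\,x+1$, so the ring they generate over the integers of $K$ is an order $\cO\subset A\Gamma$ with $\Gamma^{(2)}\subset\cO^1$. By Borel--Harish-Chandra $\rho(\cO^1)$ is a lattice in $\SL(2,\R)^r$, so $\Delta=\rP\rho_1(\cO^1)$ is an arithmetic group acting on $\bH^r$ containing the finite-index subgroup $\rP\Gamma^{(2)}$ of $\Gamma$.

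For the converse, assume $\Gamma$ is commensurable to a subgroup of some $\Delta\sim\rP\rho_1(\cO^1)$ whose defining algebra $A/K$ splits at $r$ real places. As both are lattices, $\Gamma$ is commensurable to a finite-index subgroup of $\Delta$, so by commensurability invariance its invariant trace field is $K$; this field is totally real because \eqref{isomosphism_algebra} exhibits only the real factors $\rM(2,\R)$ and $\cH$ at every infinite place of $K$. The traces of a finite-index subgroup of $\Gamma^{(2)}$ lie in $\sigma_1(\tr\cO)$, hence are algebraic integers, and integrality propagates to all of $\Gamma^{(2)}$ through the relations expressing $\tr(\gamma^m)$ as a monic integral polynomial in $\tr(\gamma)$, and then to $\Gamma$ via $\tr(\gamma^2)=\tr(\gamma)^2-2$; thus $\Gamma$ is semi-arithmetic. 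Finally, at each of the $r$ split factors the lattice $\rho(\cO^1)$ projects onto a group containing hyperbolic elements, whose traces are unbounded there, whereas at each of the $d-r$ ramified factors $\cH^1\cong\mathrm{SU}(2)$ is compact and confines the traces to $[-2,2]$; hence exactly $r$ embeddings give unbounded trace sets and the arithmetic dimension of $\Gamma$ is $r$.

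The step I expect to be the main obstacle is the place-by-place equivalence \emph{``$\sigma_i(\tr\Gamma^{(2)})$ bounded $\iff$ $A\Gamma$ ramified at $\sigma_i$''}, which is the engine converting the analytic notion of arithmetic dimension into the algebraic notion of splitting. The implication ``unbounded $\Rightarrow$ split'' rests on the fact that a subgroup of $\SL(2,\R)$ containing a hyperbolic element cannot be conjugated into the compact group $\cH^1\cong\mathrm{SU}(2)$; the reverse ``bounded $\Rightarrow$ ramified'' is the substance of Takeuchi's characterization \cite{Takeuchi75} localized one place at a time, and is where the Hilbert symbol computation with a pair of non-commuting generators of $A\Gamma$ enters. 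Once this dictionary is in place, both directions of the proposition follow by bookkeeping.
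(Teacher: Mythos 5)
First, a point of comparison: the paper does not prove this proposition at all — it is quoted from \cite[Proposition~1]{SW00}, and the surrounding material of Section~\ref{sec:prelim} (the algebra $A$, the isomorphism \eqref{isomosphism_algebra}, orders, Borel--Harish-Chandra) is precisely the construction your forward direction uses. So your attempt can only be judged on its own merits. Your forward direction is correct and is the standard argument: the invariant quaternion algebra $K[\Gamma^{(2)}]$, Takeuchi's place-by-place dictionary (ramified at $\sigma_i$ iff $\sigma_i(\tr\Gamma^{(2)})\subset[-2,2]$, the nontrivial half being that an irreducible subgroup with bounded traces is conjugate into $\mathrm{SU}(2)$, whose real span is $\cH\not\cong\rM(2,\R)$), the order $\cO$ generated by $\Gamma^{(2)}$ over the integers of $K$ coming from integrality of traces, and Borel--Harish-Chandra. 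Interestingly, this is the step you flagged as the main obstacle, and it is fine.

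The genuine gap is in the converse, exactly where you predicted only "bookkeeping". The sentence "As both are lattices, $\Gamma$ is commensurable to a finite-index subgroup of $\Delta$" is false for $r\ge 2$: the group $\Delta$ is a lattice in $\PSL(2,\R)^r$, not in $\PSL(2,\R)$ (its image in one factor is non-discrete), and any finite-covolume Fuchsian subgroup of $\Delta$ has \emph{infinite} index there — a finite-index subgroup of $\Delta$ would again be a lattice in $\PSL(2,\R)^r$, whose virtual cohomological dimension is at least $2r-1>2$, so it cannot be Fuchsian. Consequently you may not conclude that the invariant trace field of $\Gamma$ equals $K$, and your final count of unbounded places — carried out for embeddings of $K$ and traces of $\cO^1$, rather than for embeddings of the invariant trace field of $\Gamma$ and traces of $\Gamma^{(2)}$ — does not give arithmetic dimension exactly $r$. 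This is not cosmetic: take $K=\Q(\sqrt2)$, $A=\rM(2,K)$, $\cO=\rM(2,\mathbb{Z}[\sqrt2])$, so that $\Delta=\rP\rho_1(\cO^1)=\PSL(2,\mathbb{Z}[\sqrt2])$ is an arithmetic group acting on $\bH^2$; it contains the Fuchsian lattice $\PSL(2,\mathbb{Z})$, whose invariant trace field is $\Q$ and whose arithmetic dimension is $1$, not $2$. What your converse argument actually establishes is semi-arithmeticity together with arithmetic dimension \emph{at most} $r$ (each unbounded embedding of the invariant trace field extends to embeddings of $K$ at which $A$ must split, and distinct embeddings extend disjointly). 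Upgrading this to equality requires an additional input identifying $K$ with the invariant trace field of $\Gamma$ — for instance reading the commensurability in the statement as compatible with the arithmetic structure on $\bH^r$, or taking $r$ minimal among all such embeddings; this delicacy sits in the statement itself as the paper phrases it, and your proof glosses over it at the false finite-index step.
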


We observe in \cite{BCDT25} that, if $\Gamma$ is a semi-arithmetic group of arithmetic dimension $r$, then \(\Gamma^{(2)}\) is always derived from a quaternion algebra. By this we mean that $\Gamma^{(2)} < \Delta$, for some group $\Delta$ acting on $\bH^r$ and derived from a quaternion algebra. We call this group the \emph{ambient arithmetic group} of \(\Gamma\).

\subsection{}

From now on we restrict to the case $r = 2$, which is our primary interest in this paper. 
Let $\Gamma$ be a semi-arithmetic Fuchsian group, with arithmetic dimension 2. 

Notice that the group $\Gamma^{(2)}$ is derived from a quaternion algebra and has a family of embeddings $f: \Gamma^{(2)} \to \SL(2,\R)^2$ given by restriction of  $\rho\circ\rho_1^{-1}$  for each choice of $\rho = (\rho_1, \rho_2)$ as above.

\begin{definition}
    We say that $\Gamma$ \emph{admits a modular embedding} 
    if $f$ can be chosen so that there exists an $f$-equivariant holomorphic (or anti-holomorphic) function $\tilde{F}: \bH \to \pm\bH\times \pm\bH$, that is, a function $\tilde{F} = (\tilde{F}_1, \tilde{F}_2)$ satisfying
    \begin{align*}
        \tilde{F}(\gamma \cdot z) = f(\gamma)\cdot \tilde{F}(z)
        = (\gamma\cdot\tilde{F}_1(z) \, , \, \rho_2\circ\rho_1^{-1}(\gamma)\cdot\tilde{F}_2(z)),
    \end{align*}
    for all $z\in \bH$ and all $\gamma \in \Gamma^{(2)}$, with each $\tilde{F}_i$ holomorphic or anti-holomorphic.
\end{definition}

We conclude this section with some geometric properties of semi-arithmetic groups. Let $\Delta < \SL(2,\R)^2$ be the ambient arithmetic group of $\Gamma$ and 
\begin{align*}
    S = \Gamma^{(2)}\backslash\Hyp \hookrightarrow X = \Delta\backslash(\pm\Hyp\times\pm\Hyp)
\end{align*}
the associated embedding of quotient spaces. We have:
\begin{itemize}
    \item[(i)] If $\Gamma$ admits a modular embedding, then $S$ is totally geodesic  in $X$ with the Kobayashi metric (see \cite[Section~6]{McMullen23}).
    \item[(ii)] The group $\Gamma^{(2)}$ is Zariski dense in $\SL(2,\R)^2$ (see \cite[Section~5]{McMullen23}).
    \item[(iii)] If we consider the natural Riemannian metric on \(-\Hyp\) which makes complex conjugation an isometry, the surface $S$ is not totally geodesic in $X$ with the locally symmetric metric induced by the projection 
    (otherwise, the subgroup $\Gamma$ would be arithmetic by Bergeron--Clozel \cite[Proposition~15.2.2]{BergeroClozel05} and the remark immediately after the proof of the proposition addressing the non-cocompact case).  In particular, this shows that the map $F$ is not an automorphism.    
\end{itemize}

\section{Results}\label{sec:proofs}

Let \(\Gamma\) be a cocompact semi-arithmetic Fuchsian group of arithmetic dimension \(2\) which admits a modular embedding. In what follows, we denote by $K$ the invariant trace field of $\Gamma$ and by $\cO$ the ring of integers of $K$. The field \(K\) is a  totally real number field of degree \(d \ge 2\) whose embeddings into $\R$ will be denoted by $\sigma_1 = \id, \sigma_2 = \sigma, \sigma_3, \ldots,\sigma_d$. 

Let \(F:\Hyp \to \pm \Hyp\) be a holomorphic (or anti-holomorphic) map provided by the modular embedding of $\Gamma$. It satisfies 
\begin{equation}\label{equivariance}
  F(\gamma \cdot z) = \gamma^\sigma \cdot F(z)  
\end{equation}
for all \(\gamma \in \Gamma^{(2)}\), where \(\gamma^\sigma\) denotes $\rho_2\circ\rho_1^{-1}(\gamma)$. The notation is motivated by the fact that the trace and norm of \(\gamma^\sigma\) are equal to the trace and norm of the $\sigma$-conjugate of $\gamma$. In particular, \(\gamma^\sigma\) is parabolic if and only if \(\gamma\) is parabolic.

Notice that if the group $\Gamma$ is derived from a quaternion algebra, then \eqref{equivariance} applies to all elements $\gamma\in\Gamma.$

\begin{lemma}\label{norm bound}
Let \(\sigma: K \to \R\) be the unique non-trivial embedding with \(\sigma(\tr(\Gamma^{(2)})) \not\subset [-2,2] \). Then there exists \(0 < \delta=\delta(\Gamma) < 1\)  such that
\begin{equation}\label{strict bound}
|\sigma(\tr(\gamma))| < 2|\tr(\gamma)|^{1-\delta}
\end{equation}
  for  all  hyperbolic elements \(\gamma \in \Gamma^{(2)}\) (or all hyperbolic $\gamma \in \Gamma$ in case the group $\Gamma$ is derived from a quaternion algebra). In particular,
\begin{equation}\label{eq norm bound} 
|\mathrm{N}_{K \mid \Q}(\tr(\gamma))| < 2^{d-1}|\tr(\gamma)|^{2-\delta}
\end{equation}
for all  such elements \(\gamma\).
\end{lemma}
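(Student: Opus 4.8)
The plan is to exploit the modular embedding map $F\colon \Hyp \to \pm\Hyp$ together with the Schwarz--Pick lemma, upgrading the latter from a pointwise to a \emph{uniform} contraction by using compactness of the surface. First I would reduce to the holomorphic case: composing $F$ with complex conjugation if necessary (an isometry of the natural metric on $-\Hyp$ by property (ii)), I may assume $F\colon \Hyp\to\Hyp$ is holomorphic, so that the Schwarz--Pick lemma gives $\|dF_z\|\le 1$ for the hyperbolic norm of the differential at every $z$. The equivariance \eqref{equivariance}, together with the fact that $\gamma$ and $\gamma^\sigma$ act by isometries, shows that $z\mapsto \|dF_z\|$ is $\Gamma^{(2)}$-invariant, hence descends to a continuous function on the compact surface $\Gamma^{(2)}\backslash\Hyp$.

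The crucial step is to promote this bound to a uniform one. By the rigidity part of the Schwarz--Pick lemma, either $\|dF_z\|<1$ for every $z$, or $F$ is an isometry onto $\pm\Hyp$. The second alternative cannot occur: if $F$ were an isometry, the graph $z\mapsto(z,F(z))$ would realize $S$ as a totally geodesic subsurface of $X$ for the locally symmetric (product) metric, contradicting property (ii) (which would force $\Gamma$ to be arithmetic, i.e.\ of arithmetic dimension $1$). Hence $\|dF_z\|<1$ everywhere, and by compactness of $\Gamma^{(2)}\backslash\Hyp$ the maximum $\lambda:=\max_z\|dF_z\|$ satisfies $0<\lambda<1$, strictly positive because $F$ is non-constant. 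Setting $\delta:=1-\lambda$ gives the desired $0<\delta<1$, and $F$ becomes globally $\lambda$-Lipschitz, $d_{\Hyp}(F(z),F(w))\le\lambda\, d_{\Hyp}(z,w)$.

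With the uniform contraction in hand the trace bound is a short computation. Given a hyperbolic $\gamma\in\Gamma^{(2)}$, pick a point $z$ on its axis, so that $d_{\Hyp}(z,\gamma z)=\ell(\gamma)$. Using \eqref{equivariance} and the Lipschitz bound,
\begin{equation*}
d_{\Hyp}\bigl(F(z),\,\gamma^\sigma F(z)\bigr)=d_{\Hyp}\bigl(F(z),F(\gamma z)\bigr)\le \lambda\,\ell(\gamma).
\end{equation*}
If $\gamma^\sigma$ is hyperbolic, its minimal displacement $\ell(\gamma^\sigma)$ bounds the left-hand side from below, so $\ell(\gamma^\sigma)\le\lambda\,\ell(\gamma)$; translating through $|{\tr}|=2\cosh(\ell/2)$ and the elementary estimates $e^{\ell/2}\le 2\cosh(\ell/2)$ and $2\cosh(x)<2e^{x}$ for $x>0$ yields
\begin{equation*}
|\sigma(\tr\gamma)|=2\cosh\!\bigl(\tfrac{1}{2}\ell(\gamma^\sigma)\bigr)<2\,e^{\lambda\ell(\gamma)/2}\le 2\,|\tr\gamma|^{\lambda}=2\,|\tr\gamma|^{1-\delta}.
\end{equation*}
If instead $\gamma^\sigma$ is elliptic or parabolic then $|\sigma(\tr\gamma)|\le 2<2|\tr\gamma|^{1-\delta}$ directly, since $|\tr\gamma|>2$. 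This proves \eqref{strict bound}; when $\Gamma$ is derived from a quaternion algebra the identical argument applies with $\Gamma$ in place of $\Gamma^{(2)}$, as \eqref{equivariance} then holds on all of $\Gamma$.

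Finally, the norm bound \eqref{eq norm bound} follows by multiplying the conjugate traces. Since $\Gamma$ has arithmetic dimension $2$, exactly the embeddings $\sigma_1=\id$ and $\sigma_2=\sigma$ have unbounded image on $\tr(\Gamma^{(2)})$, while $|\sigma_i(\tr\gamma)|\le 2$ for $i=3,\dots,d$. Hence
\begin{equation*}
|\N_{K\mid\Q}(\tr\gamma)|=\prod_{i=1}^{d}|\sigma_i(\tr\gamma)|<|\tr\gamma|\cdot 2|\tr\gamma|^{1-\delta}\cdot 2^{\,d-2}=2^{d-1}|\tr\gamma|^{2-\delta}.
\end{equation*}
The main obstacle is the middle step: securing the strict, \emph{uniform} contraction constant $\lambda<1$. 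This is precisely where compactness of the surface and the non-arithmeticity input (property (ii), ruling out that $F$ is an isometry) enter; everything else is routine estimation.
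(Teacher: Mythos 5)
Your proof is correct and follows essentially the same route as the paper's: Schwarz--Pick plus cocompactness to obtain a uniform contraction constant $\lambda = 1-\delta < 1$, then comparison of displacements along the axis of $\gamma$ via the equivariance of $F$, the trivial bound in the elliptic case, and the same product-of-conjugates computation for the norm bound. If anything, you make explicit a point the paper's proof leaves implicit, namely that the equality case of Schwarz--Pick ($F$ an isometry, which would make the graph of $F$ totally geodesic for the product metric) is ruled out by property (ii) and non-arithmeticity; this is a genuine clarification rather than a departure from the paper's argument.
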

\begin{proof} We can assume that $\Gamma$ is derived from a quaternion algebra. 
    Let \(\Omega\) be a fundamental domain  for \(\Gamma\) contained in a compact set. The map \(F\) defined above is holomorphic (or anti-holomorphic) and not an automorphism (see property (iii) from the previous section or \cite[Theorem~3 and Corollary~5]{SW00}).  By the Schwarz--Pick lemma, there exists \(0<\delta<1\) such that \[\sup_{z \in \overline{\Omega}} |DF_z|=1-\delta.\]
    Hence, by \eqref{equivariance} we have \(\sup_{z \in \Hyp} |DF_z|=1-\delta\). Notice that cocompactness of $\Gamma$ is essential for this argument because in the presence of cusps we may have $\sup_{z \in \overline{\Omega}} |DF_z|=1$.  
    
    Let \(\gamma \in \Gamma\) be a hyperbolic element with displacement 
    \[\ell=d(z,\gamma z)=2\arcosh\left(\frac{|\tr(\gamma)|}{2}\right),\] 
    where $z \in \Hyp$ is a point on the axis of $\gamma$.

    Since \(\gamma\) is hyperbolic, \(\gamma^\sigma\) is either hyperbolic or elliptic. If \(\gamma^\sigma\) is hyperbolic with displacement \(\ell'\), we have
    \begin{align*}
    2\arcosh\left(\frac{|\sigma(\tr( \gamma))|}{2}\right) =\ell' \leq ~  & d(F(z),\gamma^\sigma  F(z)) \\
                                                   =  ~ & d(F(z),F(\gamma z)) \\
                                                  \leq ~ &  (1-\delta) \ell \\
                                                   =  ~  & (1-\delta)2\arcosh\left(\frac{|\tr(\gamma)|}{2}\right).
    \end{align*}
    Therefore, we obtain
    \begin{equation}\label{lemma 3.1 eq6}
        |\sigma(\tr(\gamma ))|  \leq 2\cosh\left((1-\delta)\arcosh\left(\frac{|\tr(\gamma)|}{2}\right)\right). 
    \end{equation}

Since \(\arcosh(x)=\log(x+\sqrt{x^2-1})\) for all real \(x \ge 1,\) we obtain that for all \(x \ge 2\) we have \(\arcosh(x/2) \leq \log(x).\) 

Hence, 
\begin{align*}
\cosh((1-\delta)\arcosh(\nicefrac{|\tr(\gamma)|}{2})) &\leq \cosh((1-\delta)\log(|\tr(\gamma)|))<|\tr(\gamma)|^{1-\delta};\\
 |\sigma(\tr(\gamma ))| &< 2|\tr(\gamma)|^{1-\delta}.
\end{align*}

If \(\gamma^\sigma\) is elliptic, we have a stronger inequality \[|\sigma(\tr(\gamma))|<2<2|\tr(\gamma)|^{1-\epsilon}\]
for all \(0<\epsilon<1\). 

This proves \eqref{strict bound} and implies \eqref{eq norm bound}:
\[
|\mathrm{N}_{K \mid \Q}(\tr(\gamma))| = |\tr(\gamma) \cdot \sigma(\tr(\gamma)) \cdot \prod_{j=3}^d \sigma_j(\tr(\gamma))| < 2^{d-1}|\tr(\gamma)|^{2-\delta}.
\]
\end{proof}

The following lemma can be seen as a generalization of the bounded clustering property of \cite{LuoSarnak94} to semi-arithmetic Fuchsian groups with arithmetic dimension \(2\) which admit a modular embedding. It gives a weaker form of bounded 
clustering, which is still sufficient for our main applications. 

\begin{lemma} \label{lem B-C}
There exist \(0<\delta<1\) and $C>0$ depending on \(\Gamma\) such that for any $N > 2$ we have
\begin{align*}
    \#(\cL(\Gamma)\cap[N-1, N]) \leq CN^{1-\delta}.
\end{align*}
Moreover, if $\Gamma$ is derived from a quaternion algebra, then any two different traces in $\cL(\Gamma)\cap[2, T]$ are $cT^{\delta-1}$ separated, with a constant $c>0$ depending only on $d$.
\end{lemma}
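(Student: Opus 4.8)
The engine of the proof is an integrality-versus-size estimate, and I would establish the separation statement first, deducing the clustering bound from it. As in the proof of Lemma~\ref{norm bound}, I would begin by reducing to the case where $\Gamma$ is derived from a quaternion algebra, so that every hyperbolic $\gamma\in\Gamma$ has $\tr(\gamma)\in\cO$ and satisfies \eqref{strict bound}. Since the trace of a hyperbolic element is real and $\cO$ is stable under negation, the nonnegative quantity $|\tr(\gamma)|$ again lies in $\cO$, and I would record its archimedean data at each of the $d$ real places: $|\tr(\gamma)|\in(2,T]$ at the identity place; $|\sigma(|\tr(\gamma)|)|=|\sigma(\tr(\gamma))|<2\,|\tr(\gamma)|^{1-\delta}$ at the place $\sigma$ by \eqref{strict bound}; and $|\sigma_j(|\tr(\gamma)|)|=|\sigma_j(\tr(\gamma))|\le 2$ for $j\ge 3$, the last being exactly the defining property of arithmetic dimension $2$ (the algebra ramifies at $\sigma_3,\dots,\sigma_d$, so these conjugates land in the compact group and have trace in $[-2,2]$).

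Now take two distinct values $t_1\neq t_2$ in $\cL'(T)$ and set $\mu=t_1-t_2\in\cO\setminus\{0\}$. Being a nonzero algebraic integer, $\mu$ has $|\mathrm{N}_{K\mid\Q}(\mu)|\ge 1$. I would bound the conjugates of $\mu$ place by place by the triangle inequality together with the three estimates above, giving $|\sigma(\mu)|<4T^{1-\delta}$ and $|\sigma_j(\mu)|\le 4$ for $j\ge 3$. Substituting these into
\[
1\le |\mathrm{N}_{K\mid\Q}(\mu)| = |t_1-t_2|\cdot|\sigma(\mu)|\cdot\prod_{j=3}^{d}|\sigma_j(\mu)| < |t_1-t_2|\cdot 4^{d-1}T^{1-\delta}
\]
and solving for $|t_1-t_2|$ yields $|t_1-t_2|>4^{-(d-1)}T^{\delta-1}$, which is the asserted separation with $c=4^{-(d-1)}$ depending only on $d=[K:\Q]$. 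The clustering bound is then immediate: applying the separation with $T=N$ to the finitely many traces lying in $[N-1,N]\subset(2,N]$, these are points of an interval of length $1$ that are pairwise more than $cN^{\delta-1}$ apart, so there are at most $c^{-1}N^{1-\delta}+1\le CN^{1-\delta}$ of them for suitable $C=C(\Gamma)$ (enlarging $C$ to absorb the finitely many trace values $\le 2$ that may occur when $N$ is close to $2$).

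The step requiring the most care is not the estimate above---which is clean once $\Gamma$ is derived from a quaternion algebra---but the reduction of the clustering bound for a \emph{general} cocompact semi-arithmetic $\Gamma$ to this derived case. Here Lemma~\ref{norm bound} furnishes \eqref{strict bound} only on $\Gamma^{(2)}$, so for $\gamma\in\Gamma\setminus\Gamma^{(2)}$ I would exploit the identity $(\tr\gamma)^2=\tr(\gamma^2)+2$ with $\gamma^2\in\Gamma^{(2)}$ to transfer the size bound at $\sigma$ from $\tr(\gamma^2)$ to $\tr(\gamma)$ at the cost of a harmless constant, the point being that squaring preserves the exponent $1-\delta$. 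The genuinely delicate bookkeeping is then the trace field $\Q(\tr\Gamma)\supseteq K$ and the finite index $[\Gamma:\Gamma^{(2)}]$: one must track the archimedean places of $\Q(\tr\Gamma)$ lying over $\sigma$ and over the bounded places $\sigma_3,\dots,\sigma_d$ in order to run the norm argument in the larger ring of integers. Since the first assertion permits a constant $C$ depending on $\Gamma$, this reduction only costs a change of constant; the commensurability-free constant $c=4^{-(d-1)}$ in the separation statement is claimed precisely in the derived case, where no such bookkeeping is needed.
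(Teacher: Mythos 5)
Your treatment of the derived case is correct and is essentially the paper's own argument, in the same order: for $t\neq s$ in $\cL(\Gamma)\cap(2,T]$ with $\Gamma$ derived from a quaternion algebra, the inequality $1\le|\mathrm{N}_{K\mid\Q}(t-s)|\le|t-s|\,(|\sigma(t)|+|\sigma(s)|)\,4^{d-2}$ combined with \eqref{strict bound} gives the $cT^{\delta-1}$ separation with $c$ depending only on $d$, and the clustering bound follows by packing the unit interval $[N-1,N]$. The transfer of the $\sigma$-bound from $\tr(\gamma^2)$ to $\tr(\gamma)$ via $\tr(\gamma)^2=\tr(\gamma^2)+2$ is also sound and is used in the paper.

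The gap is in your reduction of the general cocompact case, and it sits exactly at the places you omit from your bookkeeping: the embeddings of $L=\Q(\cL(\Gamma))$ lying over the \emph{identity} embedding of $K$. For $t\in\cL(\Gamma)$ one only knows $t^2\in K$, so an embedding $\tau$ of $L$ extending $\id_K$ satisfies $\tau(t)^2=t^2$, i.e.\ $\tau(t)=\pm t$, and the sign can genuinely be $-1$. Hence for two nearby traces $t,s\in[N-1,N]$ the conjugate $\tau(t-s)$ need not be small: if $\tau(t)=t$ but $\tau(s)=-s$, then $|\tau(t-s)|=t+s\approx 2N$. Running the norm argument in $\cO_L$ with only the controls you list therefore yields, with $k=[L:K]$ the number of embeddings over $\id_K$, a bound of the shape $1\lesssim|t-s|\,N^{k-1}\,N^{(1-\delta)k}$, i.e.\ a clustering estimate of order $N^{(k-1)+(1-\delta)k}$, whose exponent exceeds $1$ whenever $k\ge 2$ (which is the typical situation, since $\cL(\Gamma)\not\subset K$ in general) --- worse than trivial and useless for Theorem~\ref{thm1}. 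The same loss occurs if one instead squares and applies the derived-case separation to $t^2-2,\,s^2-2\in\cL(\Gamma^{(2)})$: this gives only $|t-s|\gtrsim N^{2\delta-3}$, hence a count of order $N^{3-2\delta}$. This is why the paper does not run a direct norm estimate in the general case but a \emph{pigeonhole argument over sign patterns}: among more than $2^{2k-1}(M+1)$ traces in $[N-1,N]$ one finds two, $t$ and $s$, with $0<|t-s|\le 1/M$ \emph{and} with the same sign vector $(\epsilon_i)$ at all embeddings over $\id$ and over $\sigma$; only then is $|\tau_i(t-s)|=|t-s|$ small at the $k$ places over the identity, and the norm inequality closes with $M\le CN^{1-\delta}$. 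The resulting factor $2^{2k-1}$ is indeed ``only a constant,'' but the device that produces it is the missing idea, not a change of constant absorbed by bookkeeping; it is also the reason the paper claims the uniform separation only in the derived case, whereas your architecture (separation first, clustering as a corollary) cannot survive the reduction.
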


\begin{proof} First assume that $\Gamma$ is derived from a quaternion algebra.
Consider the set \(\{\mathrm{id},\sigma,\sigma_3,\ldots,\sigma_d\}\) of the  embeddings of \(K\). We recall that \(|\sigma_j(\tr(\gamma))|<2\) for any non-identity element \(\gamma \in \Gamma^{(2)}\) and for all \(j=3,\ldots,d\). Let \(s\neq t \in (2,T]\) be two traces of elements of $\Gamma$. By inequality \eqref{strict bound} from Lemma~\ref{norm bound} applied to $|\sigma(s)|$ and $|\sigma(t)|$, we have 
\[1 \leq |\mathrm{N}_{K \mid \Q}(t-s)| \leq |t-s|(|\sigma(t)|+|\sigma(s)|)4^{d-2} <C_1|t-s|T^{1-\delta}\]
with \(C_1 = 4^{d-1} > 0\). These inequalities show that \(t\) and \(s\) are \(cT^{\delta-1}\) separated with \(c=\frac{1}{C_1}>0.\)

In general, similar to \cite[Lemma~2.1]{LuoSarnak94}, we consider all the embeddings of the field $L = \Q(\cL(\Gamma))$ into $\mathbb{C}$ which we denote by $\tau_1 = \id, \tau_2, \ldots, \tau_n$, $n = [L:\Q]$. We order them so that the first $k$ embeddings extend the embedding $\sigma_1 = \id$ of the subfield $K$ of $L$, and the next $k$ embeddings extend $\sigma_2 = \sigma$. Since $\tr(\gamma)^2 = \tr(\gamma^2) + 2$ we have \(x^2-2 \in \cL(\Gamma^{(2)})\) for any \(x \in \cL(\Gamma)\), in particular, \(x^2 \in K\). 

Suppose there are more than $2^{2k-1}(M+1)$ points of $\cL(\Gamma)$ in the interval $[N-1, N]$, with $M$ a positive integer. Then by the pigeonhole principle there are at least two different elements $t,s\in \cL(\Gamma)$ and some $\epsilon_i \in \{-1,1\}$,  $i = 2,\ldots,2k$, such that $0 < |t - s| \leq \frac1M$, $\tau_i(t) = \epsilon_i t$, $\tau_i(s) = \epsilon_i s$ for $2\le i \le k,$ and $\tau_i(t) = \epsilon_i \sqrt{\sigma_2(t^2)}$, $\tau_i(s) = \epsilon_i \sqrt{\sigma_2(s^2)}$ for \(k<i\le 2k\). 

Let $0\neq t-s \in \cO_L$ be two such elements. For $1\leq i \leq k$, we have $|\tau_i(t-s)| \leq \frac1M$ by the assumption. For the next $k$ embeddings $k< i \leq 2k$,  $|\tau_i(t-s)| \leq 4N^{1-\delta}$ by Lemma~\ref{norm bound} applied to $\gamma^2$ and our assumption on $\tau_i$. And for the remaining for $i > 2k$, we have $|\tau_i(t-s)| \leq 4$  (we assume that $\tau_i(\cL(\Gamma)) \subset [-2,2]$ for $i > 2k$, which follows from $\tr(\gamma)^2 = \tr(\gamma^2) + 2$ and $\sigma_j(\cL(\Gamma^{(2)})) \subset [-2,2]$, for $j >2$). Hence we show that  
$$1 \leq |\mathrm{N}_{L \mid \Q}(t-s)| \leq \frac1{M^k} (4N)^{(1-\delta)k}4^{n-2k},$$
which implies that $M \leq C N^{1-\delta}$ with $C = C(\Gamma) >0$.
\end{proof}

\begin{corollary} \label{cor lem B-C}
There exist \(0<\delta<1\) and \(C>0\) depending on \(\Gamma\) such that for $T>2$,     
     \begin{equation*}
        \#(\cL(\Gamma)\cap[2, T]) < C T^{2-\delta}.
    \end{equation*}
\end{corollary}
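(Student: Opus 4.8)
The plan is to derive the global count by summing the local count of Lemma~\ref{lem B-C} over a partition of the range of traces into consecutive unit intervals. Since $\cL'(T)$ is exactly the number of distinct trace values lying in $(2,T]$, I would cover this interval by the unit intervals $[n-1,n]$ with $n$ an integer running from $3$ to $\lceil T\rceil$, giving
\begin{equation*}
\cL'(T)\;\le\;\sum_{n=3}^{\lceil T\rceil}\#\bigl(\cL(\Gamma)\cap[n-1,n]\bigr).
\end{equation*}
Each $n$ in this range satisfies $n>2$, so Lemma~\ref{lem B-C} applies to every summand and replaces the right-hand side by $C\sum_{n=3}^{\lceil T\rceil}n^{1-\delta}$, with the very same $\delta$ and $C$ provided by the lemma.

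What remains is a routine estimate of the resulting power sum. As $0<\delta<1$, the exponent $1-\delta$ lies in $(0,1)$, so the summand is increasing in $n$ and each term is at most $C\lceil T\rceil^{1-\delta}$; since there are fewer than $2T$ terms (using $\lceil T\rceil<T+1<2T$ for $T>2$), one already gets
\begin{equation*}
\cL'(T)\;<\;2T\cdot C\,(2T)^{1-\delta}\;=\;2^{2-\delta}C\,T^{2-\delta},
\end{equation*}
which is the asserted bound after renaming the constant. For a cleaner constant I would instead compare with the integral, using monotonicity to write $\sum_{n=3}^{\lceil T\rceil}n^{1-\delta}\le\int_{0}^{\lceil T\rceil+1}x^{1-\delta}\,\rd x=(\lceil T\rceil+1)^{2-\delta}/(2-\delta)\le(T+2)^{2-\delta}/(2-\delta)$, which is again $O(T^{2-\delta})$ and absorbs the factor $1/(2-\delta)$ into $C$.

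I do not expect a genuine obstacle: the entire arithmetic content sits in Lemma~\ref{lem B-C}, and the corollary is a summation. The only points needing care are bookkeeping ones --- choosing the covering intervals so that every trace in $(2,T]$ is counted while respecting the hypothesis $N>2$ of the lemma, and checking that the partial sums of $n^{1-\delta}$ grow like the next power $T^{2-\delta}$ with no logarithmic loss, precisely because $1-\delta>-1$. It is worth stressing the qualitative meaning of the exponent: unlike the arithmetic case, where bounded clustering gives the linear bound $\cL'(T)=O(T)$, here the per-interval count grows with $N$ and one only secures the weaker exponent $2-\delta$. Still, $2-\delta<2$, so $\cL'(T)$ is of strictly smaller order than the total number of geodesics, which is of order $T^2/\log T$ via the prime geodesic theorem together with $|\tr(\gamma)|\asymp e^{\ell(\gamma)/2}$; it is exactly this gap that will later force exponential growth of the mean multiplicity, with the smaller exponent $\delta/2$.
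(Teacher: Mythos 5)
Your proposal is correct and is exactly the argument the paper intends: the corollary is stated without proof as an immediate consequence of Lemma~\ref{lem B-C}, and the implicit derivation is precisely your summation of the per-interval bound $CN^{1-\delta}$ over the unit intervals covering $(2,T]$, yielding $O(T\cdot T^{1-\delta})=O(T^{2-\delta})$. Your closing remarks (the loss from linear to $T^{2-\delta}$ compared with the arithmetic B-C case, and the resulting exponent $\delta/2$ in the mean multiplicity) also match how the corollary is used in the proof of Theorem~\ref{thm1}.
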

\begin{proof}
    We have
    $$ \#(\cL(\Gamma)\cap[2, T]) \leq \sum_{N = 3}^{T}\#(\cL(\Gamma)\cap[N-1, N]) < C T^{1-\delta} T.$$
\end{proof}

We now prove the first main result:

\begin{theorem}\label{thm1}
 All cocompact semi-arithmetic Fuchsian groups which admit a modular embedding and have arithmetic dimension at most \(2\) have exponential growth of the mean multiplicity in the length spectrum.   
\end{theorem}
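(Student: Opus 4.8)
The plan is to combine the prime geodesic theorem with the upper bound on distinct lengths from Corollary~\ref{cor lem B-C}. Recall that the mean multiplicity is defined as $\langle g(\ell)\rangle = \mathcal{N}(\ell)/\mathcal{N}'(\ell)$, where $\mathcal{N}(\ell)$ counts closed geodesics of length at most $\ell$ and $\mathcal{N}'(\ell)$ counts \emph{distinct} lengths in $(0,\ell]$. By the prime geodesic theorem, $\mathcal{N}(\ell)\sim e^\ell/\ell$, so the entire task reduces to producing a suitable upper bound on $\mathcal{N}'(\ell)$ that is of smaller exponential order than $\mathcal{N}(\ell)$.

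First I would translate the trace-counting bound of Corollary~\ref{cor lem B-C} into a length-counting bound via the relation \eqref{eq:trace}. Since $\ell(\gamma) = 2\arcosh(|\tr(\gamma)|/2)$, a geodesic of length at most $\ell$ corresponds to a trace $t=|\tr(\gamma)|$ with $t \le 2\cosh(\ell/2)$; equivalently, the number of distinct lengths $\mathcal{N}'(\ell)$ equals $\cL'(T)$ with $T = 2\cosh(\ell/2)$. For large $\ell$ we have $T \asymp e^{\ell/2}$, so Corollary~\ref{cor lem B-C} gives
\begin{equation*}
  \mathcal{N}'(\ell) = \cL'(2\cosh(\ell/2)) \le C\,(2\cosh(\ell/2))^{2-\delta} \le C'\, e^{(1-\delta/2)\ell}
\end{equation*}
for a constant $C'>0$ and all sufficiently large $\ell$. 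The key point is that the exponent $1-\delta/2$ is strictly smaller than $1$.

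Combining the two estimates yields
\begin{equation*}
  \langle g(\ell)\rangle = \frac{\mathcal{N}(\ell)}{\mathcal{N}'(\ell)} \ge \frac{c\,e^\ell/\ell}{C'\,e^{(1-\delta/2)\ell}} = c''\,\frac{e^{\delta\ell/2}}{\ell},
\end{equation*}
which exhibits exponential growth of the mean multiplicity with a positive exponent $\delta/2$. This establishes the EGMM property and completes the proof. I would remark that this argument applies verbatim to the arithmetic dimension $1$ case, where the classical bounded clustering property gives the even stronger bound $\mathcal{N}'(\ell)\ll e^{\ell/2}$ and hence recovers the Luo--Sarnak exponent $1/2$.

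The main obstacle is not in this final assembly, which is routine, but rather lies upstream in Lemma~\ref{lem B-C} and its Corollary~\ref{cor lem B-C}: the crux is securing the power-saving exponent $2-\delta$ (strictly below $2$) for the distinct-trace count, which in turn rests on the strict contraction $\sup_z|DF_z| = 1-\delta < 1$ furnished by the Schwarz--Pick lemma together with the cocompactness of $\Gamma$. Once that power saving is in hand, the passage to EGMM is immediate, so I would present the proof of Theorem~\ref{thm1} as essentially a one-line consequence of the prime geodesic theorem and Corollary~\ref{cor lem B-C}.
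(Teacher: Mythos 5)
Your proposal is correct and follows essentially the same route as the paper: bound $\mathcal{N}'(\ell)$ by $\cL'(2\cosh(\ell/2))$ via the trace--length relation \eqref{eq:trace}, apply Corollary~\ref{cor lem B-C} to get the power-saving exponent $1-\delta/2<1$, and divide into the prime geodesic theorem asymptotic $\mathcal{N}(\ell)\sim e^\ell/\ell$. The only cosmetic difference is that the paper disposes of the arithmetic (dimension $1$) case by citing Luo--Sarnak directly rather than rerunning the argument with the classical bounded clustering property, as you suggest in your closing remark.
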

\begin{proof}
First assume that  \(\Gamma\) is a non-arithmetic group as above. 

Let \(\mathcal{N}'(\ell)\) be the number of distinct lengths bounded by \(\ell\) of closed geodesics in the orbifold \(\Gamma \backslash \Hyp.\)  Each length produces a unique positive trace of a hyperbolic element of \(\Gamma\) bounded by \(2\cosh({\ell}/{2})\), hence by Corollary~\ref{cor lem B-C} we have 
\begin{equation}
    \mathcal{N}'(\ell) \leq C\left(2\cosh\left(\frac{\ell}{2}\right)\right)^{2-\delta}.
\end{equation}

Since \(\cosh(\frac{\ell}{2})\leq e^{\frac{\ell}{2}}\), we obtain
\begin{equation}
     \mathcal{N}'(\ell) \leq C'e^{(1-\delta')\ell}
\end{equation}
for some constants \(C'>0\) and \(0<\delta'<\frac{1}{2}.\)
By the prime geodesic theorem (cf.~\cite[Theorem~3.4]{Sarnak80}), the number \(\mathcal{N}(\ell)\) of closed geodesics of \(\Gamma \backslash \Hyp\) counted with multiplicity is given by
\begin{align*} 
\mathcal{N}(\ell) \sim \dfrac{e^\ell}{\ell} & \text{ when } \ell \to \infty. \end{align*}

By differentiating the integral formula for $\langle g(\ell) \rangle$ we see that the asymptotic behavior of the mean multiplicity is determined by the average derivatives of the counting functions (cf. \cite[Section~3.2]{Bol93_long}):
$$
\dfrac{\mathcal{N}(\ell)}{\ell} \sim \langle g(\ell) \rangle \dfrac{\mathcal{N'}(\ell)}{\ell}\text{, when }\ell\to\infty.
$$

Thus the mean multiplicity of closed geodesics of length at most \(\ell\) grows as
\begin{align*}
    \langle g(\ell) \rangle \geq c \dfrac{e^{\delta' \ell}}{\ell} 
\end{align*}
for sufficiently large $\ell$, where the constants \(c>0\) and \(0<\delta'<\frac{1}{2}\) depend on  \(\Gamma\).

If $\Gamma$ is arithmetic the result is well known. It follows from the bounded clustering property of Luo and Sarnak \cite[Lemma~2.1]{LuoSarnak94} applied the same way that we use our Lemma~\ref{lem B-C} (see also \cite{Bol93} for more details).
\end{proof}

Let now \(S\) be a complete non-compact hyperbolic surface with \(n\) cusps. We recall that \(S\) has a \emph{compact core}, i.e. there exists a compact subsurface \(\Omega \subset S\) with \(n\) boundary components, formed by horocycles, containing all the topology of \(S\). More precisely, there exists a retract \(r: S \to \Omega\) whose induced homomorphism of the fundamental groups is an isomorphism.
 
\begin{lemma}\label{lemma: extension map}
    Let \(S\) be a complete non-compact hyperbolic surface with finite area and compact core \(\Omega\), and let \(X\) be a metric space of curvature \(\leq \kappa\) (in the sense of Alexandrov) for some \(\kappa\leq 0\). For any Lipschitz map \(f:S \to X\), there exists a Lipschitz map \(g: S \to X\) such that \(g|_\Omega  = f |_\Omega\) and the Lipschitz constant of \(g\) is equal to the Lipschitz constant of \(f |_\Omega\). In particular, \(g_*=f_*:\pi_1(S) \to \pi_1(X).\) 
\end{lemma}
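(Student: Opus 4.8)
The plan is to read the statement as a Lipschitz extension problem and solve it with the generalized Kirszbraun theorem of Lang and Schroeder \cite{LangSchr97}. Set $L := \mathrm{Lip}(f|_\Omega)$. Since $\Omega \subseteq S$, the restriction $f|_\Omega \colon \Omega \to X$ is an $L$-Lipschitz map defined on a subset of $S$, and the whole content of the lemma---apart from the final remark on fundamental groups---is to extend it to an $L$-Lipschitz map $g$ on all of $S$ with $g|_\Omega = f|_\Omega$. Observe that any extension $g$ of $f|_\Omega$ satisfies $\mathrm{Lip}(g) \ge \mathrm{Lip}(g|_\Omega) = \mathrm{Lip}(f|_\Omega) = L$, so it suffices to produce an extension with $\mathrm{Lip}(g) \le L$; this is exactly what a norm-nonincreasing (Kirszbraun-type) extension delivers.

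First I would verify the hypotheses of the extension theorem for the pair $(S, X)$. By assumption $X$ is complete and has curvature $\le \kappa$ with $\kappa \le 0$, so it is a $\mathrm{CAT}(\kappa)$ space and, since $\kappa \le 0$, the theorem of \cite{LangSchr97} carries no diameter restriction. The source $S$ is a complete hyperbolic surface, hence a complete length space of constant curvature $-1$, so it has Alexandrov curvature bounded below by $-1$. The extension theorem requires the lower curvature bound of the source and the upper bound of the target to be taken with respect to the same comparison curvature; this holds directly when $\kappa \le -1$, and for $-1 < \kappa < 0$ one first rescales the metric of $S$ by a constant $c \ge |\kappa|^{-1/2}$, turning its curvature into $-c^{-2} \ge \kappa$. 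A uniform rescaling of the source multiplies all Lipschitz constants by $c^{-1}$ and the extension theorem preserves Lipschitz constants, so the resulting map is again $L$-Lipschitz for the original metric of $S$. With the bounds matched, \cite{LangSchr97} applied to the subset $\Omega \subseteq S$ and the $L$-Lipschitz map $f|_\Omega$ yields an $L$-Lipschitz extension $g \colon S \to X$ with $g|_\Omega = f|_\Omega$; together with the previous paragraph this gives $\mathrm{Lip}(g) = L = \mathrm{Lip}(f|_\Omega)$.

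The topological assertion then follows formally. Let $\iota \colon \Omega \hookrightarrow S$ be the inclusion; by the compact-core property $\iota_* \colon \pi_1(\Omega) \to \pi_1(S)$ is an isomorphism. Since $g \circ \iota = f \circ \iota$ (both equal $f|_\Omega$), functoriality gives $g_* \circ \iota_* = f_* \circ \iota_*$, and cancelling the isomorphism $\iota_*$ gives $g_* = f_* \colon \pi_1(S) \to \pi_1(X)$.

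I expect the only genuine difficulty to be the correct invocation of the generalized Kirszbraun theorem, i.e. matching the Alexandrov curvature comparisons of \cite{LangSchr97} for the pair $(S, X)$: one must pair the lower bound $-1$ coming from the hyperbolic metric of $S$ with the upper bound $\kappa$ of the target so that the nested family of closed balls produced in the one-point extension step has nonempty intersection, with completeness of $X$ guaranteeing that the intersection is attained. The rescaling device handles $-1 < \kappa < 0$, while the borderline case $\kappa = 0$, where the source is strictly more negatively curved than the comparison bound of the target, is the delicate one; this is presumably why the intended application works with a strictly negatively curved target, the map $F \colon \Hyp \to \pm\Hyp$ above landing in a copy of hyperbolic space where $\kappa = -1$. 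Everything else---the reduction to an extension problem, the equality of Lipschitz constants, and the $\pi_1$-statement---is routine.
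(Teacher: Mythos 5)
Your proposal follows the same route as the paper: both consist essentially of invoking the generalized Kirszbraun theorem of Lang--Schroeder \cite{LangSchr97}, and your reduction to an extension problem, the rescaling device for $-1 < \kappa < 0$, and the $\pi_1$ argument are all correct. The genuine gap is the case $\kappa = 0$, which you explicitly leave open. This is not a discardable edge case: it is allowed by the statement and, contrary to your closing speculation, it is exactly the case the paper needs. In Lemma~\ref{lem norm bound 2} the present lemma is applied not to $F\colon\Hyp\to\pm\Hyp$ but to the graph-type map $G\colon S \to X$ with $X = \tilde{\Gamma}\backslash(\pm\Hyp\times\pm\Hyp)$, which is locally isometric to the Riemannian product $\Hyp\times\Hyp$. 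A product of hyperbolic planes contains totally geodesic flat planes (span one direction from each factor), so its optimal upper curvature bound is $0$, not $-1$; moreover this $X$ is not simply connected, hence only \emph{locally} of curvature $\le 0$ and not a $\mathrm{CAT}(0)$ space, whereas your verification step asserts ``$X$ is complete \dots so it is a $\mathrm{CAT}(\kappa)$ space'' (completeness is not even among the lemma's hypotheses). A proof covering only $\kappa\le -1$, or only globally $\mathrm{CAT}$ targets, therefore does not support Theorem~\ref{thm2}.

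Moreover, the $\kappa=0$ case cannot be closed by the same appeal to \cite{LangSchr97}, so this is a missing idea rather than a routine verification. Equal-constant Kirszbraun genuinely fails when the source's lower curvature bound ($-1$ here) lies strictly below the target's upper bound ($0$): map the three vertices of an equilateral triangle of large side $s$ in $\Hyp$ isometrically to $\mathbb{R}^2$; the hyperbolic circumradius is $s/2+O(1)$, smaller than the Euclidean circumradius $s/\sqrt{3}$, so there is no $1$-Lipschitz extension even to the circumcenter. Consequently any correct treatment of $\kappa = 0$ must exploit the specific structure of the pair $(S,\Omega)$ --- the complement of the compact core is a union of cusps, and $f$ is assumed Lipschitz on \emph{all} of $S$, which forces the images of the short horocyclic loops deep in the cusps to be short --- and this is precisely the hypothesis your very first reduction discards when you replace $f$ by $f|_\Omega$. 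For comparison, the paper's own proof is likewise a one-line citation of \cite[Theorem~A]{LangSchr97} (pairing the target's bound $\kappa\le 0$ with the assertion that $S$ has curvature $\le -1$, although that theorem requires a \emph{lower} curvature bound on the source), so you have reproduced its approach and, to your credit, correctly isolated the hypothesis mismatch it glosses over; but the case you then set aside is the actual mathematical content a complete proof would have to supply.
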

\begin{proof}
Since \(X\) has curvature \(\leq \kappa\) for some \(\kappa \leq 0\) and \(S\) has curvature \(\leq -1\) in the sense of Alexandrov, we can apply the generalized Kirszbraun theorem \cite[Theorem~A]{LangSchr97} in order to obtain an extension \(g\) of \(f \mid \Omega\) with the same Lipschitz constant.
\end{proof}

\begin{lemma}\label{lem norm bound 2}
Let \(\Gamma\) be a non-cocompact semi-arithmetic Fuchsian group of arithmetic dimension \(2\) which admits a modular embedding. Then there exists a  constant \(0<\delta<1\) depending on $\Gamma$ such that 
\[|\sigma(\tr(\gamma))| \leq 2|\tr(\gamma)|^{1-\delta}\]
for any hyperbolic element \(\gamma \in \Gamma^{(2)}.\)    
\end{lemma}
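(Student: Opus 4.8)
The plan is to reduce the non-cocompact case to the cocompact argument of Lemma~\ref{norm bound} by replacing the non-compact surface $S=\Gamma\backslash\Hyp$ with its compact core $\Omega$, on which the Schwarz--Pick estimate gives a genuine contraction factor bounded away from $1$. Recall that in the cocompact proof the key input was that $\sup_{z\in\overline\Omega}|DF_z|=1-\delta<1$ on a compact fundamental domain, and equivariance then propagated this bound to all of $\Hyp$. In the cusped setting this fails directly, since $|DF_z|$ may approach $1$ as $z$ exits toward a cusp; the role of Lemma~\ref{lemma: extension map} is precisely to circumvent this.

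First I would take the modular embedding map $F:\Hyp\to\pm\Hyp$ and pass to the induced Lipschitz map on the surface level. The target $\pm\Hyp$ (with the Riemannian metric making conjugation an isometry) is a CAT$(0)$ space, hence has curvature $\leq 0$ in the sense of Alexandrov, so Lemma~\ref{lemma: extension map} applies with $\kappa=0$. The Schwarz--Pick lemma gives that on the compact core $\Omega$ the Lipschitz constant of the relevant map is some $1-\delta<1$ (the restriction to the compact set $\Omega$ avoids the cusps where the derivative norm degenerates to $1$). Using the lemma I obtain a map $g:S\to X$ that agrees with $F$ on $\Omega$, induces the same map on $\pi_1$, and is globally $(1-\delta)$-Lipschitz. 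Lifting $g$ to a map $\tilde g:\Hyp\to\pm\Hyp$ that is $\Gamma^{(2)}$-equivariant in the same sense as \eqref{equivariance}, I then have a globally $(1-\delta)$-Lipschitz equivariant map, which is exactly what the trace estimate needs.

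With such a map in hand I would replicate the displacement computation from Lemma~\ref{norm bound} verbatim. For a hyperbolic $\gamma\in\Gamma^{(2)}$ with axis through $z\in\Hyp$ and displacement $\ell=d(z,\gamma z)=2\arcosh(|\tr(\gamma)|/2)$, equivariance of $\tilde g$ gives
\[
2\arcosh\!\left(\frac{|\sigma(\tr(\gamma))|}{2}\right)\leq d\bigl(\tilde g(z),\gamma^\sigma\tilde g(z)\bigr)=d\bigl(\tilde g(z),\tilde g(\gamma z)\bigr)\leq(1-\delta)\,\ell,
\]
provided $\gamma^\sigma$ is hyperbolic; the elliptic case is stronger, giving $|\sigma(\tr(\gamma))|<2$ as before. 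Applying $\arcosh(x/2)\leq\log(x)$ and $\cosh$ then yields $|\sigma(\tr(\gamma))|\leq C|\tr(\gamma)|^{1-\delta}$, which is exactly the asserted bound (the constant $C$ absorbs the possible slack introduced by passing to the extended map rather than $F$ itself, explaining why the clean constant $2$ of \eqref{strict bound} is relaxed to a general $C$).

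The main obstacle is checking that the extended Lipschitz map $g$ genuinely descends from / lifts to an \emph{equivariant} map with the right equivariance $\tilde g(\gamma z)=\gamma^\sigma\tilde g(z)$, since Lemma~\ref{lemma: extension map} only guarantees a Lipschitz extension at the level of the surface $S$ together with the condition $g_*=f_*$ on fundamental groups. I would need to verify that the $\pi_1$-compatibility, combined with the fact that $g$ and $F$ agree on $\Omega$ (which carries all the topology of $S$), forces the lift $\tilde g$ to intertwine the $\Gamma^{(2)}$-action on $\Hyp$ with the $\gamma^\sigma$-action on the target via the monodromy $\gamma\mapsto\gamma^\sigma$ coming from $f$. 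A secondary technical point is confirming that the target quotient $X=\Delta\backslash(\pm\Hyp\times\pm\Hyp)$, or the relevant factor, is a space of curvature $\leq\kappa\leq0$ so that the hypothesis of Lemma~\ref{lemma: extension map} is met; here I would use that $\pm\Hyp$ is non-positively curved and that totally geodesic / product constructions preserve the Alexandrov upper curvature bound.
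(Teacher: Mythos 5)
There is a genuine gap at the central step of your plan: the globally $(1-\delta)$-Lipschitz map you extract from Lemma~\ref{lemma: extension map} does not exist, and cannot be produced by that lemma. The problem is the target space. The modular embedding $F:\Hyp\to\pm\Hyp$ is \emph{equivariant} for $\gamma\mapsto\gamma^\sigma$, not invariant, so it induces no map ``on the surface level'' with values in $\pm\Hyp$; to descend it you would have to quotient the target by the monodromy group $\Gamma^\sigma=\rho_2\circ\rho_1^{-1}(\Gamma)$, which in the non-arithmetic case is not a discrete subgroup of $\PSL(2,\R)$ (this is exactly what distinguishes arithmetic dimension $2$ from $1$), so $\Gamma^\sigma\backslash(\pm\Hyp)$ is not a metric space of curvature $\leq\kappa$, and Lemma~\ref{lemma: extension map} --- which is a statement about maps defined on $S$, not an equivariant extension theorem --- does not apply. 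The only map that does descend is the graph map $G(\Gamma z)=\tilde{\Gamma}(z,F(z))$ with values in $X=\tilde{\Gamma}\backslash(\pm\Hyp\times\pm\Hyp)$, where $\tilde{\Gamma}$ is the lattice containing all $\rho(\gamma)=(\gamma,\gamma^\sigma)$; but its Lipschitz constant on the compact core is $\sqrt{1+(1-\delta)^2}>1$, and in fact \emph{no} map $S\to X$ inducing $\rho_*$ on $\pi_1$ can be $(1-\delta)$-Lipschitz: a lift of an $L$-Lipschitz such map would give $\sqrt{\ell^2+\ell_\sigma^2}\leq L\ell$ for every hyperbolic $\gamma$ (the left-hand side being the displacement of $\rho(\gamma)$ in the product, with $\ell,\ell_\sigma$ the displacements of $\gamma,\gamma^\sigma$), forcing $L\geq 1$. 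So the object on which your displacement computation runs is vacuous, and this is not a technicality that the ``main obstacle'' paragraph can patch.

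The paper's proof repairs precisely this point with a product (graph) trick that is absent from your proposal. It extends the graph map $G$ via Lemma~\ref{lemma: extension map} to a $\sqrt{1+(1-\delta)^2}$-Lipschitz map $G_1:S\to X$ and lifts it equivariantly --- the lifting/equivariance issue you flag as the main obstacle is indeed just covering space theory and is not where the difficulty lies. Then, for a hyperbolic $\gamma$, it chooses the point $z$ on the axis of $\gamma$ to lie in the lift $\tilde{\Omega}$ of the compact core, which is possible because every closed geodesic meets the compact core; only at such points does one know $\tilde{G}(z)=(z,F(z))$. There the two estimates $\ell^2+\ell_\sigma^2\leq d\bigl(\tilde{G}(z),\tilde{G}(\gamma z)\bigr)^2$ and $d\bigl(\tilde{G}(z),\tilde{G}(\gamma z)\bigr)^2\leq\bigl(1+(1-\delta)^2\bigr)\ell^2$ combine, with the first coordinate contributing exactly $\ell^2$, to give the Pythagorean cancellation $\ell_\sigma\leq(1-\delta)\ell$, after which the trace bound follows as in Lemma~\ref{norm bound}. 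Two further points you omit: the reduction handling torsion (the paper first proves the bound for a torsion-free finite-index subgroup derived from a quaternion algebra, and this reduction --- not slack from the extension --- is the actual source of the constant $C$ replacing the clean $2$ of \eqref{strict bound}), and the fact that the Lipschitz constant of the relevant map on $\Omega$ is $\sqrt{1+(1-\delta)^2}$ rather than $1-\delta$.
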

\begin{proof} \textbf{1.} Let us first assume that the group $\Gamma$ is torsion-free. By restricting, if necessary, to the subgroup $\Gamma^{(2)}$ we can assume that $\Gamma$ is derived from a quaternions algebra.
Consider the hyperbolic surface \(S=\Gamma \backslash \Hyp\) and a lattice \(\Delta < \SL(2,\R)\times\SL(2,\R)\) with \(\rho(\gamma):=(\gamma,\gamma^{\sigma}) \in \Delta\) for all \(\gamma \in \Gamma.\) 

Let \(\Omega \subset S\) be a compact core of \(S\) and let $F:\Hyp \to \pm \Hyp$ be a holomorphic or anti-holomorphic map such that \(F(\gamma \cdot z)=\gamma^\sigma \cdot F(z)\), for any \(z \in \Hyp\) and \(\gamma \in \Gamma\), provided by the modular embedding of $\Gamma$. Consider the metric space \(X=\Delta \backslash(\pm\Hyp\times\pm\Hyp)\), which is locally isometric to \(\Hyp\times\Hyp\) equipped with the product metric. We have an embedding \(G:S \to X\)  given by 
$$G(\Gamma z)=\Delta(z,F(z)).$$  
When  restricted to \(\Omega\), the map \(G\) is \(\sqrt{1+(1-\delta)^2}\)-Lipschitz for some \(0<\delta<1\) according to the Schwarz--Pick lemma (cf. the proof of Lemma~\ref{norm bound}). 

By applying Lemma~\ref{lemma: extension map}, we can find a \(\sqrt{1+(1-\delta)^2}\)-Lipschitz map \(G_1:S \to X\) which is homotopic to \(G\). 

Given any hyperbolic element \(\gamma \in \Gamma,\) we consider the closed geodesic \(\overline{\gamma} \subset S\) of length \(\ell\) given by the projection of the axis of \(\gamma\). By the definition of \(G\), the image \(G(\gamma)\) is freely homotopic to the projection on \(X\) of the axis of the hyperbolic element \(\rho(\gamma) \in \Delta\), whose length is given by \(\sqrt{\ell^2+\ell_{\sigma}^2}\),  where \(\ell_\sigma\) is the displacement of \(\gamma^{\sigma}.\) Since \(G\) is homotopic to \(G_1\), we conclude that \(G(\gamma)\) is homotopic to \(G_1(\gamma)\).  Hence, the displacement of \(\rho(\gamma)\) is smaller than the length of the closed curve \(G_1(\gamma)\). Now we use the fact that \(G_1\) is \(\sqrt{1+(1-\delta)^2}\)-Lipschitz in order to get the estimate

\begin{equation*}
    \ell^2+\ell_{\sigma}^2  \leq \big(1+(1-\delta)^2\big) \ell^2.
\end{equation*}

Hence, 
\begin{equation}\label{eq: comparison between discplacements}
 \ell_{\sigma}   \leq (1-\delta)\ell.
\end{equation}
 
There are two possibilities, the first one occurs when  \(\ell_\sigma =0\), i.e. \(\gamma^\sigma\) is elliptic, in which case the lemma follows immediately.  
If not, then \(\gamma^\sigma\) is hyperbolic and we have \(\ell_\sigma=2\arcosh\left(\frac{|\sigma(\tr(\gamma))|}{2}\right)\).  By \eqref{eq: comparison between discplacements}, we obtain
  \begin{align*}
 2\arcosh\left(\frac{|\sigma(\tr(\gamma))|}{2}\right)   \leq  2(1-\delta)\arcosh\left(\frac{|\tr(\gamma)|}{2}\right); \\
        |\sigma(\tr(\gamma ))|  \leq 2\cosh\left((1-\delta)\arcosh\left(\frac{|\tr(\gamma)|}{2}\right)\right). 
  \end{align*}
This is estimate \eqref{lemma 3.1 eq6} from  Lemma~\ref{norm bound} and the proof concludes in the same way.

\textbf{2.} It remains to consider the case when the group $\Gamma$ has torsion. Let us fix  a finite index torsion-free subgroup of the group  $\Gamma^{(2)}$ and denote it $\Gamma_1$. 

With the group \(\Gamma_1\) chosen, by \eqref{eq: comparison between discplacements} we have that \(\ell(\beta^\sigma) \leq (1-\delta)\ell(\beta)\) for some constant \(\delta>0\) and all hyperbolic elements \(\beta \in \Gamma_1\). Now we notice that this inequality can be extended to all hyperbolic \(\gamma \in \Gamma^{(2)}\). Indeed, given any such \(\gamma\), there exists a positive integer \(j\) such that \(\gamma^j \in \Gamma_1\). Since the action of \(\sigma\) preserves the products, we have \((\gamma^\sigma)^j = (\gamma^j)^\sigma\). By the previous argument, the length of the geodesic associated to $(\gamma^j)^\sigma$ satisfies inequality \eqref{eq: comparison between discplacements}. Hence for any hyperbolic $\gamma\in\Gamma^{(2)}$, we have 

\[\ell(\gamma^\sigma)=\frac {\ell((\gamma^j)^\sigma)}j \leq \frac{(1-\delta)\ell(\gamma^j)}j = (1-\delta)\ell(\gamma).\]
This is an extension of formula \eqref{eq: comparison between discplacements} to the group $\Gamma^{(2)}$ and the proof concludes as in part~1.
\end{proof}

We now deduce the second main result of the paper.

\begin{theorem}\label{thm2}
 All non-cocompact semi-arithmetic Fuchsian groups which admit a modular embedding and have arithmetic dimension at most \(2\) have exponential growth of the mean multiplicity in the length spectrum.   
\end{theorem}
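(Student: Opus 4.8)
The plan is to follow the blueprint of the proof of Theorem~\ref{thm1}, replacing the cocompact trace estimate of Lemma~\ref{norm bound} by its non-cocompact counterpart, Lemma~\ref{lem norm bound 2}. Since the arithmetic case (arithmetic dimension $1$) is settled by Luo and Sarnak \cite{LuoSarnak94}, I would assume throughout that $\Gamma$ is non-arithmetic of arithmetic dimension $2$. The essential input is the inequality $|\sigma(\tr(\gamma))| \leq C|\tr(\gamma)|^{1-\delta}$ for all hyperbolic $\gamma \in \Gamma^{(2)}$ furnished by Lemma~\ref{lem norm bound 2}; the only difference from the cocompact estimate \eqref{strict bound} is the replacement of the explicit constant $2$ by a constant $C = C(\Gamma)$, which will not affect the exponents in any of the subsequent counting bounds.

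First I would turn this pointwise bound into a norm bound. Since $\sigma$ is the unique unbounded non-trivial embedding, we still have $|\sigma_j(\tr(\gamma))| < 2$ for $j \geq 3$, so multiplying the conjugates gives $|\mathrm{N}_{K\mid\Q}(\tr(\gamma))| \leq C'|\tr(\gamma)|^{2-\delta}$ for hyperbolic $\gamma \in \Gamma^{(2)}$, the exact analogue of \eqref{eq norm bound}. With this in hand I would re-run the argument of Lemma~\ref{lem B-C} with only the constants changed: for two distinct traces $t,s \in \cL'(T)$ one passes to the field $L = \Q(\cL(\Gamma))$, uses the relation $\tr(\gamma)^2 = \tr(\gamma^2)+2$ to control the embeddings $\tau_i$ extending $\sigma$ via Lemma~\ref{lem norm bound 2}, and invokes the pigeonhole principle together with $1 \leq |\mathrm{N}_{L\mid\Q}(t-s)|$. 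The exponent $1-\delta$ is preserved, yielding $\#(\cL(\Gamma)\cap[N-1,N]) \leq CN^{1-\delta}$ and hence, as in Corollary~\ref{cor lem B-C}, the counting estimate $\cL'(T) \leq CT^{2-\delta}$.

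The proof then concludes exactly as in Theorem~\ref{thm1}. Each distinct geodesic length $\ell$ corresponds to a positive trace at most $2\cosh(\ell/2)$, so $\mathcal{N}'(\ell) \leq \cL'(2\cosh(\ell/2)) \leq C'e^{(1-\delta')\ell}$ for some $0 < \delta' < \tfrac12$, using $\cosh(\ell/2) \leq e^{\ell/2}$. Combining with the prime geodesic theorem $\mathcal{N}(\ell) \sim e^\ell/\ell$ gives $\langle g(\ell)\rangle = \mathcal{N}(\ell)/\mathcal{N}'(\ell) \gtrsim c\, e^{\delta'\ell}/\ell$, the desired exponential growth of the mean multiplicity.

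The genuine obstacle — and the reason Theorem~\ref{thm2} does not follow immediately from the argument of Theorem~\ref{thm1} — lies entirely in establishing Lemma~\ref{lem norm bound 2}, which is why that lemma is proved first. The Schwarz--Pick estimate used in Lemma~\ref{norm bound} relies on $\sup_{\overline\Omega}|DF_z| = 1-\delta < 1$, and this fails in the presence of cusps, where one may have $\sup|DF_z| = 1$ and no uniform contraction. The remedy, already executed via the generalized Kirszbraun extension of Lemma~\ref{lemma: extension map}, is to work with the graph embedding $G\colon S \to X$ only on the compact core $\Omega$, where the Lipschitz constant is strictly controlled, and to extend it to all of $S$ without increasing that constant. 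Since the displacement of any hyperbolic element can be realized, after conjugation, by a point in a lift $\tilde\Omega$ of the compact core, every closed geodesic still sees the controlled contraction factor $1-\delta$, which is precisely what drives the trace inequality.
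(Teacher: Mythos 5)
Your proposal is correct and follows essentially the same route as the paper: the paper's own proof of Theorem~\ref{thm2} simply invokes Lemma~\ref{lem norm bound 2} (proved via the compact core and the Kirszbraun extension of Lemma~\ref{lemma: extension map}) to get the weak bounded clustering property of Lemma~\ref{lem B-C}, and then repeats the counting argument of Theorem~\ref{thm1}. Your write-up fills in the same steps in more detail, correctly noting that replacing the constant $2$ in \eqref{strict bound} by $C(\Gamma)$ leaves all exponents, and hence the final estimate $\langle g(\ell)\rangle \gtrsim c\, e^{\delta'\ell}/\ell$, unchanged.
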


\begin{proof}
Let $\Gamma$ be a non-cocompact Fuchsian group satisfying the assumptions. By Lemma~\ref{lem norm bound 2}, the group $\Gamma$ satisfies the weak bounded clustering property of Lemma~\ref{lem B-C}. The rest of the argument follows the line of the proof of Theorem~\ref{thm1}.
\end{proof}

\section{Examples}\label{290625.1}

In this section, we give examples of groups to which Theorems \ref{thm1} and \ref{thm2} apply, i.e., semi-arithmetic groups of arithmetic dimension $2$ admitting modular embedding. The first family of examples, coming from triangle groups, is finite and includes both cocompact and non-cocompact groups. The second family contains infinitely many commensurability classes. This family is comprised of Veech groups, hence non-cocompact.

\subsection{Triangle group examples}

Consider a geodesic triangle in $\bH$ with angles 
$\pi/a, \pi/b, \pi/c$, where $a,b,c$ are positive integers (possibly infinite). Such a triangle always exists and is unique up to isometry as long as $1/a + 1/b + 1/c <1$. Reflections across the geodesic lines supporting the sides of the triangle generate a discrete subgroup of the full group of isometries of $\bH$. The index $2$ subgroup of orientation-preserving isometries is a Fuchsian group of finite coarea and presentation $\langle x,y,z \mid x^a = y^b = z^c = xyz\rangle$, called a \emph{triangle group}.

Arithmetic properties of triangle groups were first studied in \cite{Takeuchi77}, in particular showing that these groups are semi-arithmetic. Furthermore, in \cite{Cohen90} and \cite{Ricker02} all triangle groups are proved to admit modular embedding. In \cite{Nugent17} Nugent and Voight generalise the methods in \cite{Takeuchi77}, proving there exist at most finitely many triangle groups of a given arithmetic dimension (see also \cite{BCDT25}). They also describe an algorithm to list all such groups of a given arithmetic dimension. Including both cocompact and non-cocompact groups, precisely $164$ triangle groups have arithmetic dimension $2$, and a complete list can be found in their paper.

\subsection{Veech group examples}

First let us briefly recall the definition of a Veech group. An introduction to the subject can be found in several sources, for instance \cite{Earle97}, \cite{McMullen03} and \cite{Veech89}.

Let $X\in \cT_g$ be a marked Riemann surface and let $q$ be a holomorphic quadratic differential on $X$ of norm $\norm{q} = 1$, where $\norm{q} = \int_X |q|$. Away from the zeroes of $q$, one can find coordinate charts $\zeta: U \to V$ with respect to which $q$ has the form $\rd \zeta^2$. The transition functions of such charts are half-translations: $z \mapsto \pm z + c$, for some $c \in \C$. Together, these \emph{preferred coordinate charts} define a \emph{flat structure} on $X$.

An element $A \in \SL(2,\R)$ induces a new set of coordinate charts $\{A\zeta: U \to A(V)\}$ whose transition functions are half-translations. In particular, $A$ defines a new point $X_A \in \cT_g$ (along with a new holomorphic quadratic differential). Note that if $A \in\SO(2)$, then $\id: X \to X_A$ is holomorphic, which means they represent the same point in $\cT_g$. The map $A \mapsto X_A$ descends to an embedding
\begin{align*}
  F: \bH \cong \SO(2)\backslash \SL(2,\R) \hookrightarrow \cT_g,
\end{align*}
which is isometric with respect to the Poincaré metric on $\bH$ and the Teichmüller metric on $\cT_g$. We shall denote the image of $F$ by $\Delta_q$ and call it the \emph{Teichmüller disc} of $q$.

The isometric action of the Teichmüller modular group $\Mod(X)$ on $\cT_g$ defines the stabiliser group of $\Delta_q$:
\begin{align*}
    \Stab(\Delta_q) = \{h\in\Mod(X) \mid h(\Delta_q) = \Delta_q\}.
\end{align*}
Let $N$ be the subgroup of $\Stab(\Delta_q)$ consisting of those elements that fix $\Delta_q$ pointwise. In particular, elements of $N$ are (conformal) automorphisms of $X$, and so $N$ is finite. Note that each $h \in \Stab(\Delta_q)$ corresponds to an isometric automorphism of $\bH$, and thus to an element of $\PSL(2,\R)$. We may then realise the quotient $N\backslash \Stab(\Delta_q)$ as a subgroup of  $\PSL(2,\R)$, denoted $\PSL(X,q)$.

Alternatively, define $\Aff^+(q)$ to be the group of quasi-conformal self\-/homeomorphisms $g$ of $X$ that are affine with respect to the flat structure induced by $q$. This means that, for each $p \in X\setminus \{\text{zeros of }q\}$ there are preferred coordinate charts $\zeta_1$ around $p$ and $\zeta_2$ around $g(p)$ such that $\zeta_2 \circ g \circ \zeta_1^{-1}$ is an affine homeomorphism between domains of $\R^2$. The derivative of this affine homeomorphism is independent of $p$ and of the coordinate charts up to sign. Its determinant has norm 1, since $\norm{q}=1$. This induces a homomorphism $a : \Aff^{+}(q) \to \PSL(2,\R)$, taking an affine quasi-conformal homeomorphism $g$ to its derivative $a(g)$ with respect to the flat structure of $q$.

By \cite[Lemma 5.2]{Earle97}, $\Aff^{+}(q)$ projects isomorphically onto its image in $\Mod(X)$. Moreover, by \cite[Theorem 1]{Earle97} $\Aff^{+}(q)$ coincides with $\Stab(\Delta_q)$ and the kernel of $g \mapsto a(g)$, with $N$. So $\PSL(X,q)$ is the image of $\Aff^{+}(q)$ under $a$.

The group $\PSL(X,q)$ is a Fuchsian group, i.e., it is discrete in $\PSL(2,\R)$ \cite{Veech89}.

\begin{definition}
  When $\PSL(X,q)$ is a lattice in $\PSL(2,\R)$, we call it the \emph{Veech group} of $(X,q)$.
\end{definition}

In this case, the map $\bH \xlongrightarrow{F} \cT_g \longrightarrow \cM_g$ factors through the quotient $V = \PSL(X,q) \backslash \bH$, giving the \emph{Teichmüller curve}:
\begin{align*}
  f : V \to \cM_g.
\end{align*}

Let us assume henceforth that $\Gamma$ is a Veech group of some $(X,q)$. In \cite{Veech89}, Veech proved that $\Gamma$ is always non-cocompact. 

Some arithmetic properties of $\Gamma$ are investigated in \cite{McMullen03}. The ones of interest to us are the fact that the trace field $\Q(\tr(\Gamma)) \subset \R$ is a number field and that the traces of $\Gamma$ are algebraic integers \cite[Theorem 5.2]{McMullen03}.

In \cite[Section 9]{McMullen03}, McMullen constructs a family of examples of Teichmüller curves in $\cM_2$. Briefly, given a rational Euclidean polygon $P$, one assembles (finitely many) rotated copies of $P$ into a polygon whose sides fall into pairs of parallel segments. Identifying each pair of parallel sides via a Euclidean translation yields a Riemann surface $X$ (see, for example, \cite[Section 1.3]{Masur02}). The holomorphic 1-form $\rd z$ of $\C$ descends to a holomorphic 1-form $\omega$ on $X$, giving us a pair $(X,\omega^2)$ as before and, consequently, a Fuchsian group $\PSL(X,\omega)$ (which may or may not be a lattice, i.e., a Veech group). Using certain L-shaped polygons, McMullen constructs a family of Veech groups $\Gamma_d$ 
with trace field $\Q(\sqrt{d})$, for all square-free positive integers $d$. As can be seen from the construction (see also \cite[Theorem 10.3]{McMullen03}), the invariant trace field of $\Gamma_d$ contains $\Q$ properly, so $k\Gamma_d = \Q(\sqrt{d})$. In particular, the groups $\Gamma_d$ are (non-cocompact) semi-arithmetic of arithmetic dimension $2$. In fact, they satisfy an even stronger property: as shown in \cite[Theorem 10.1]{McMullen03}, the groups $\Gamma_d$ admit modular embedding.

Notice from the properties listed above that all but finitely many of these groups are not commensurable to a triangle group. These were the first examples of groups admitting modular embedding that are neither arithmetic nor commensurable to a triangle group, answering a question posed in \cite{SW00}.

\section{Quantum chaos}

A big part of the interest in geodesic length spectra comes from their connection with quantum mechanics. In this section we briefly review the connection and make some comments related to our results. We refer to \cite{Bol93_long}, \cite{Bog97}, and \cite{Sarnak95} for extensive surveys of this topic.

Consider the motion of a free point particle on an oriented surface of constant negative curvature $S = \Gamma\backslash\Hyp$. This is a classical Hamiltonian system whose trajectories are defined by the geodesic flow on the cotangent bundle $T^*(S)$. It is well known that the geodesic flow is \emph{chaotic}, meaning that when restricted to a fixed energy level (a time-independent Hamiltonian) the flow is ergodic and has almost everywhere positive Lyapunov exponents \cite{AA68}. Its trajectories are very sensitive to the initial data and unpredictable in this sense. At the opposite extreme we have the \emph{integrable systems} whose time evolution in the phase space $T^*(S)$ takes place on invariant tori. These systems are the most regular and predictable. 

The basic problem of \emph{quantum chaos} is to characterize universal properties of quantum systems that reflect the regular or chaotic features of the underlying classical dynamics.

The quantum mechanics of the Hamiltonian $H$ on a surface $S$ is described by the wave functions of the particle $\psi_j$ and energy levels $\lambda_j$. 
These are given by the solutions of the eigenvalue problem for the Laplace--Beltrami operator $\Delta$ on $S$, as it is well known that $\hslash^2\Delta$ corresponds to a quantization of $H$:
$$-\hslash^2\Delta \psi_j = \lambda_j\psi_j.$$
The eigenvalues (resp. $L^2$-eigenvalues, if the surface $S$ is not compact) form a discrete set $0=\lambda_0 < \lambda_1 \leq \lambda_2\ldots$ and the eigenfunctions $\psi_j$ can be chosen to form an orthonormal basis (see \cite[Section~4]{Sarnak95}). It follows that for the geodesic flow the 
semi-classical limit $\hslash \to 0$ corresponds to the large eigenvalue limit $\lambda_j \to \infty$. Hence the basic problem here is to understand the behavior of $\lambda_j$ and $\psi_j$ as $j\to\infty$ and especially their relation to the orbits of the geodesic flow. In general, the relation between the classical length spectrum and quantum energy levels is described by the trace formulae. 

There are different statistics that may be used to measure the fine structure of the distribution of the energy levels $\lambda_j$. One of the first choices is the number variance $\Sigma^2(\lambda, L)$ which measures the average derivation from the expected number of the number of energy levels in the intervals of length $L$. 
The basic models for the distribution of the $\lambda$'s relevant to the spectral problems are
\begin{itemize}
    \item[(i)] \emph{Poisson:} The $\lambda_j$ are random meaning that $\mu_j = \lambda_{j+1}-\lambda_j$ has a Poisson distribution and $\mu_j$'s are independent from each other. This distribution has number variance $\Sigma^2(L) \sim L$.
    \item[(ii)] \emph{Random Matrix Theory:} The $\lambda_j$ are the eigenvalues of a random (symmetric or hermitian) matrix, in which case $\Sigma^2(L) \sim \log(L)$ and the energy level spectrum is rigid.
\end{itemize}
It is generally expected that for integrable systems the eigenvalues follow the Poisson behavior while for chaotic systems they follow the distributions of random matrix ensembles.

In \cite{LuoSarnak94}, Luo and Sarnak obtained a lower bound on the average number variance for arithmetic surfaces. They proved that 
$$
\frac{1}{L}\int_0^L \Sigma^2(\lambda,\xi) d\xi \gg \frac{\sqrt{\lambda}}{\log^2(\lambda)},\text{ for } \frac{\sqrt{\lambda}}{\log(\lambda)} \ll L \ll \sqrt{\lambda}. 
$$
For $L \sim \sqrt{\lambda}/\log(\lambda)$ which is just inside the parameter range it gives the average number variance asymptotically bounded below by $L/\log(L)$. 
Surprisingly, this result shows that the quantum mechanics on arithmetic surfaces is consistent with the Poisson distribution and thus comes close to the integrable systems. 

The proof of Luo and Sarnak uses the bounded clustering property and the Selberg trace formula. On the qualitative level the non-rigidity of the spectrum of arithmetic quantum systems is attributed to the large multiplicities in the length spectrum (cf. \cite{Sarnak95}). Bolte \cite[Section~4]{Bol93_long} and Bogomolny et al \cite[Section~6]{Bog97} considered a simple physical model which allows to estimate the number variance $\Sigma^2(\lambda, L)$ directly from the growth of mean multiplicity. Their method applied to arithmetic surfaces shows agreement with the Poisson distribution but in a different range of parameters compared to the results in \cite{LuoSarnak94}. This is consistent with the fact that the B-C property is stronger than EGMM. On the other hand, the physical model allows to consider the effect of EGMM of semi-arithmetic groups. This was done by Bogomolny et al in their paper, as at that time the experimental data already provided a strong evidence that some non-arithmetic systems might have large multiplicities in their length spectra. However, it appears that the exponent in the exponential growth being smaller than $\frac{1}{2}$ does not allow to perform the required approximations in the physical model. The authors also point out that the energy levels of non-arithmetic groups computed numerically show strong corelation with the rigid spectrum of the random matrix model \cite[Section~10]{Bog97}. 

Our results indicate that it would be interesting to find some particular features in the spectrum of a quantized system that distinguish semi-arithmetic  surfaces with EGMM from the other non-arithmetic surfaces. It is also appealing to understand if, as suggested by our estimates, the arithmetic dimension $2$ plays a special role.

Multiplicities in length spectra of arithmetic hyperbolic
$3$-orbi\-folds with the connection to $3$-dimensional chaotic quantum systems were studied by Marklof \cite{Markl96}. He established strong exponential growth of mean multiplicities in the arithmetic case, although for cocompact groups the proof is conditional on a conjecture on the number of gaps in the length spectra. In a subsequent paper with Aurich \cite{AuMarkl96}, the authors discovered experimentally that the tetrahedron $T_8$ group $\Gamma$, which is known to be non-arithmetic, also shows exponential growth of mean multiplicities. It would be interesting to have a rigorous proof of this phenomenon. 
The group of the tetrahedron  $T_8$ is defined over the field $\Q\left(\sqrt{-(1+\sqrt{2})(\sqrt{2}+\sqrt{5})}\right)$ which has $2$ complex and $4$ real places, and the associated quaternion algebra is ramified at the real places (see \cite[Section~13.1]{Maclachlan03}). It can also be checked that the elements of $\Gamma$ have integral traces. Although the notion of semi-arithmeticity is not well developed in dimensions higher than two, these properties bring the Aurich--Marklof example somewhat close to the groups considered in our paper. 

\medskip

\noindent
\textbf{Data Availability.} Data sharing is not applicable to this article as no datasets were generated or analysed during the current study.

\medskip
\noindent
\textbf{Declarations}

\medskip

\noindent
\textbf{Conflict of interest.} The authors have no Conflict of interest to declare that are relevant to the content of this article.

\bibliographystyle{alpha}
\bibliography{Bibliography}
\end{document}